  \DeclareMathOperator{\diag}{diag}
\newtheorem{definition}{\sc Definition}[section]
\newtheorem{theorem}{\sc Theorem}[section]
\newtheorem{eje}{\sc Example }[section]
\newtheorem{coro}{\sc Corollary}[section]
\newtheorem{remark}{\sc Remark }[section]
\newcommand{\re}{\mathbb{R}}
\newcommand{\de}{\mathbb{D}}
\newcommand{\co}{\mathbb{C}}
\renewcommand\theenumi{\@roman\c@enumi}\makeatother
\mathchardef\pFcomma=\mathcode`, 
\newcommand*\pFq[5]{%
 \begingroup
 \begingroup\lccode`~=`,
   \lowercase{\endgroup\def~}{\pFcomma\mkern\pFqskip}%
 \mathcode`,=\string"8000
 {}_{#1}F_{#2}\left(\genfrac..{0pt}{}{#3}{#4};#5\right)%
 \endgroup
}
\begin{document}
\title[On monotonicity of zeros of paraorthogonal polynomials]{On monotonicity of zeros of paraorthogonal polynomials on the unit circle}
\author{K. Castillo}

\address{CMUC, Department of Mathematics, University of Coimbra, 3001-501 Coimbra, Portugal}

\email[K. Castillo]{kenier@mat.uc.pt}

\thanks{The author thanks to J. Petronilho for many fruitful and stimulating discussions.}

\subjclass[2010]{15A42, 30C15}
\date{\today}
\keywords{Paraorthogonal polynomials on the unit circle, zeros,  monotonicity, unitary upper Hessenberg matrix with positive subdiagonal elements, eigenvalues}
\begin{abstract}
  The purpose of this note is to establish, in terms of the primary coefficients in the framework of the tridiagonal theory developed by Delsarte and Genin in the environment of nonnegative definite Toeplitz matrices,  necessary and sufficient conditions for the monotonicity with respect to a real parameter of zeros of paraorthogonal polynomials on the unit circle. It is also provided tractable sufficient conditions and an application example. These polynomials can be regarded as the characteristic polynomials of any matrix similar to an unitary upper Hessenberg matrix with positive subdiagonal elements. 
\end{abstract}
\maketitle

\section{Introduction}

Toward the beginning of the last century, the methods in the study of zeros of polynomials gained an autonomous interest which gave rise to monographs like Dieudonn\'e's {\em La th\'eorie analytique des poly\^omes d'une variable (\`a coefficients quelconques)} in 1938, Marden's {\em The geometry of the zeros of a polynomial in a complex variable} in 1949, or Obrechkoff's {\em Zeros of polynomials} \footnote{Available in Bulgarian (with only 500 copies printed) until the English translation published in 2003. In spite of its language, the original was widely used and frequently quoted.} and {\em Verteilung und Berechnung der Nullstellen reeller Polynome} both in 1963. Nowadays, the developments in Statistical Physics, Random Matrix Theory, Probability, and Combinatorics, among other fields, give this topic a new face that attracts a lot of interest to the subject. For a more recent account on analytic theory of polynomials we refer the reader to the monograph by Rahman and Schmeisser \cite{RS02}. In this framework, the study of zeros of {\em orthogonal polynomials} (hereafter abbreviated by OP) have occupied a privileged place since they exhibit very attractive properties. In particular, the monotonicity  with respect to a real parameter of zeros of {\em orthogonal polynomials on the real line} (hereafter abbreviated by OPRL) have been studied as early as the 1880's, when A. Markov established from the absolutely continuous part of the orthogonality measure sufficient conditions for it  \cite[p. $178$]{M86} (cf. \cite[Theorem $7.1.1$]{I05}). As a consequence of their main result, he deduced  the monotonicity of zeros of Jacobi polynomials \footnote{He also stated the monotonicity of zeros of Gegenbauer polynomials, although the proof is based on an incorrect result \cite[p. $181$]{M86} (cf. \cite[p. $121$]{S75}).}. After A. Markov's work ---without knowing about its existence, since the paper was sent to him by Hermite on January 27, 1887 \cite[Lettre $105$]{SH}---  Stieltjes \footnote{Stieltjes' paper was submitted in 1886. Commonly, in the framework of OP this work is erroneously quote as published in that year. It is probably due to the fact that in all the editions of Szeg\H{o}'s book (cf. \cite{S75}) the reference to Stieltjes' work contains this misprint. However, a careful reader may note that Szeg\H{o} used Stieltjes' Collected Papers in the 1914--1918 edition when consulting this work, and the misprint on the date could come from there.} making use of the linear homogeneous differential equation of second order that Jacobi and Gegenbauer polynomials satisfy deduced \footnote{The key argument of Stieltjes' proof, as he wrote in a letter of January 27, 1887 to Hermite \cite[Lettre $105$]{SH}, is a {\em``\'etroite connexion entre la th\'eorie des \'equations alg\'ebriques et celle des formes quadratiques d\'efinies"}. Indeed, in \cite{S87} he proved that if $\mathrm{H}$ is a real symmetric matrix  with nonpositive off-diagonal elements and $\mathrm{H}>\mathrm{O}$, then $\mathrm{H}^{-1}>\mathrm{O}$. (Following Lax \cite[Chapter $10$]{L07}, positivity of a Hermitian matrix $\mathrm{X}$ is denoted as $\mathrm{X}>\mathrm{O}$ or  $\mathrm{O}<\mathrm{X}$.) Nowadays, $\mathrm{H}$ is known as Stieltjes' matrix.} the monotonicity of zeros for these polynomials \cite[Sections $3$ and $4$]{S87}. In the two aforementioned papers the monotonicity of zeros of Gegenbauer polynomials was used to improve some inequalities  for the zeros of Legendre polynomials given by Bruns \footnote{In a note added in January 1887 at the end of his work, after receiving A. Markov's paper, Stieltjes refers such a result in the following terms: {\em ``L'auteur y d\'eduit d'abord la limitation des racines de l'\'equation $X_n=0$ d\'ej\`a obtenue par M. Bruns, et ensuite il obtient ausii et pour la premi\`ere fois, la limitation plus \'etroite (B).".} In a letter of February 3, 1887 to Hermite \cite[Lettre $106$]{SH}, he also commented it. But it is not true that  A. Markov was the first in to improve Bruns' inequalities due to the errors in his work; it is just to attribute the improvement of this result to Stieltjes. All of the above suggests that Stieltjes did not read A. Markov's arguments or, less likely, did not notice the mistakes in his work.} in 1881 and reworked by Szeg\H{o} in  the 1930's \cite{S35}.

Readers familiar with the literature on OP known that the dramatic difference between OPRL and {\it orthogonal polynomials on the unit circle} (hereafter abbreviated by OPUC) is channeled by {\it paraorthogonal polynomials on the unit circle} (hereafter abbreviated by POPUC). The behavior of their zeros, directly or indirectly, is the main reason by which POPUC have received significant attention over the last years (cf. \cite{K85, AGR86, G86, AGR88, GR90, BE91, DG88, DG91a, DG91b, W93, B93, BH95, KN07, S07, S07b} and references therein). From the theoretical point of view, POPUC answer a problem posed by Tur\'an at the beginning of the $1970$'s \cite[Problem LXVI, p. 60]{T74o}:  {\em ``It is known that the zeros of the $n$th orthogonal polynomial (with respect to a Lebesgue-integral function on an interval) separate the zeros of the $(n+1)$th polynomial. What corresponds to this fact  on the unit circle?"} \footnote{We quote the English translation provided by Sz\"usz \cite[Problem LXVI]{T80}.}.  As far as we can tell, this question was solved accidentally by Delsarte and Genin \cite[Section 5]{DG88} \footnote{These authors never mentioned the connection with the question posed by Tur\'an.} when they were working in linear prediction theory. After that, several authors stated additional properties of zeros of POPUC.  A recent work with refined results on the interlacing of zeros of POPUC and historical comments can be found in \cite{CP1}.  

 It is well known that POPUC can be regarded as the characteristic polynomials of any matrix similar to a unitary upper Hessenberg matrix with positive subdiagonal elements (cf. \cite[Proposition 5]{DG91b}). The purpose of this note is to study the monotonicity with respect to a real parameter of zeros of POPUC as an eigenvalue problem for this class of matrices, using in a consequent manner basic methods of linear algebra.  Our main result establishes, in terms of the primary coefficients in the framework of the so-called {\em tridiagonal theory} developed by Delsarte and Genin in the environment of nonnegative definite Toeplitz matrices, necessary and sufficient conditions (and tractable sufficient conditions) for the monotonicity with respect to a real parameter of eigenvalues of unitary upper Hessenberg matrices with positive subdiagonal elements. Our results can be considered as analogues of those presented by Ismail and Muldoon \cite{IM91} (cf. \cite[Section 7.3]{I05}) concerning the monotonicity of zeros of OPRL. In Section \ref{notation} we set up notation and terminology. In Section \ref{mainresult} our main results are stated and proved, and an application example is presented. Section \ref{secfinal} is devoted to some further results and an example within the broader context of matrices with simple eigenvalues on the unit circle. %

\section{Basic concepts and notations}\label{notation} 
We mainly follow the notation of \cite{S05I, S05II, S11}. Denote by $\de$ the (open) unit disk and by $\mathbb{S}^1$ its boundary $\partial \de$, i.e.,$$\de:=\{z \in \co  \,|\,  |z|<1\}\,\, ,\quad \mathbb{S}^1:=\{z \in \co  \,|\,  |z|=1\}\, .$$
Let $\alpha_j \in \de$ ($j=0,\dots,n-1$) and $\tau_n \in \mathbb{S}^1$. In the next definition and subsequently, $\mathrm{I}$ denotes the identity matrix, whose order is made explicit or may be inferred from the context. Set
\begin{align*}
\Theta_j:=\Theta(\alpha_j), \quad \Theta(\alpha):=\begin{pmatrix}
\overline{\alpha} & \rho \\
\rho & -\alpha
\end{pmatrix}\,, \quad \rho:=\left(1-|\alpha|^2\right)^{1/2} ,
\end{align*}
and 
$$
\mathrm{G}_j:=\diag \left(\mathrm{I}_{j}, \Theta_j, \mathrm{I}_{n-j-1} \right), \quad \mathrm{G}_n:=\diag(\mathrm{I}_{n}, \overline{\tau}_n).
$$
Define the $(n+1)$-by-$(n+1)$ matrix 
\begin{align}\label{lm}
\mathrm{G}:&= \mathrm{G}_0\mathrm{G}_1\cdots \mathrm{G}_n=\begin{pmatrix}
\overline{\alpha}_0 & \rho_0 \overline{\alpha}_1 & \rho_0 \rho_1 \overline{\alpha}_3 & \cdots & \rho_0 \cdots \rho_{n-1} \overline{\tau}_n\\
\rho_0 & -\alpha_0 \overline{\alpha}_1 & -\alpha_0 \rho_1 \overline{\alpha}_2 & \cdots & -\alpha_0 \rho_1 \cdots \rho_{n-1} \overline{\tau}_n\\
 & \rho_1 & -\alpha_1 \overline{\alpha}_2 & \cdots & -\alpha_1 \rho_2 \cdots \rho_{n-1} \overline{\tau}_n\\
  & & \ddots & \ddots & \vdots \\
  & & & \rho_{n-1} & -\alpha_{n-1} \overline{\tau}_n
\end{pmatrix}.
\end{align}
 By construction, the matrix $\mathrm{G}$ is a unitary upper Hessenberg matrix with positive subdiagonal elements. Conversely, any unitary $(n+1)$-by-$(n+1)$ upper Hessenberg matrix with positive subdiagonal elements is uniquely parameterized in the form \eqref{lm} by $2n+1$ real numbers that compose the parameters of the array $(\alpha_0, \dots, \alpha_{n-1}, \tau_n)$ \cite{G82o} (cf. \cite{G82} and \cite[Proposition $1$]{AG92}). In order to make the notation more transparent, we write $\mathrm{G}(\alpha_0,\dots,$ $\alpha_{n-1},\tau_n)$ instead of $\mathrm{G}$.

 \begin{definition}[cf. {\cite[Proposition $5$]{DG91b}}]\label{POPUC}
Let $\mathrm{G}(\alpha_0,\dots,\alpha_{n-1},\tau_{n})$ be the matrix given by \eqref{lm}, where $\alpha_j \in \de$ ($j=0,\dots,n-1$) and $\tau_n \in \mathbb{S}^1$. The (monic) polynomial $P_{n+1}$ defined by
 \begin{align*}
P_{n+1}(z):=\det \big(z \mathrm{I}-\mathrm{G}(\alpha_0,\dots,\alpha_{n-1},\tau_{n})\big)
 \end{align*}
is the POPUC of degree $n+1$ associated with the array $(\alpha_0,\dots,\alpha_{n-1},\tau_n)$. 
\end{definition}

\begin{definition}[cf. {\cite[Equation $2.29$]{DG91a}}]
 Let $\alpha_j \in \de$ ($j=0,\dots,n-1$) and $\tau_n \in \mathbb{S}^1$. For each $\zeta \in \mathbb{S}^1$, the numbers defined recursively by
 \begin{align}\label{taus}
 \tau_{n}(\zeta):=\tau_n, \quad
 \tau_{j}(\zeta):=\frac{\overline{\zeta} \ \alpha_j+ \tau_{j+1}(\zeta)}{\overline{\alpha}_j \tau_{j+1}(\zeta) +\overline{\zeta}} \, \quad  (j=n-1,\dots,0),
 \end{align}
 are the pseudoreflexion coefficients associated with the array $(\alpha_0,\dots,\alpha_{n-1},\tau_n)$.
 \end{definition}
 
 \section{Main results}\label{mainresult}
In this section we formulate and prove our main results. Our ideas borrow from some ideas of Fan related with the generalized Cayley transform of strictly dissipative matrices and others by Delsarte and Genin in the framework of the tridiagonal theory (cf. \cite{DG88, DG91a, DG91b, BH95}), where a one-parameter second order recurrence relation is the main object of study.

\begin{theorem}\label{main}
Let $\mathrm{G}(\alpha_0,\dots,\alpha_{n-1},\tau_{n})$ be a differentiable matrix-valued function of the real variable $t$ given by \eqref{lm}, where for all $t$, $\alpha_j:=\alpha_j(t) \in \de$ $(j=0,\dots,n-1)$ and $\tau_n:=\tau_n(t) \in \mathbb{S}^1$. Define $\tau_j(\zeta)$ via \eqref{taus} for $\zeta \in \mathbb{S}^1\setminus S$, $S$ being the set of eigenvalues of  $\mathrm{G}(\alpha_0,\dots,\alpha_{n-1},\tau_n)$. Define recursively the numbers \footnote{Cf. \cite[Equations $3.14$ and $3.19$]{DG91a}. Here and subsequently, $\zeta^{1/2}$ denotes either of the square roots of $\zeta$.}
 \begin{align*}
 \beta_0(\zeta)&:=\frac{1}{\zeta^{1/2}+\tau_0(\zeta) \zeta^{1/2}},\\
  \beta_j(\zeta)&:= \frac{1}{\beta_{j-1}(\zeta)}\frac{1}{\overline{\zeta}+ \tau_j(\zeta)\overline{\alpha_{j-1}}}\frac{1}{1-\overline{\tau_j} \alpha_{j+1}} \quad (j=1,\dots,n),
 \end{align*}
and the polynomials \footnote{Cf. \cite[Equation $3.7$]{DG91a}.}
 \begin{align}\label{rec1}
 \nonumber p_{-1}(z,\zeta)&:=0, \quad p_0(z,\zeta):=p_0 \in \mathbb{R}\setminus\{0\},\\
  p_{j+1}(z, \zeta)&:=\left(\beta_j(\zeta)+\overline{\beta_j(\zeta)} z\right)p_j(z,\zeta)-zp_{j-1}(z,\zeta) \quad (j=0,\dots,n).
\end{align}
Define also the matrix
\begin{align*}
\mathrm{A}:=\zeta^{1/2}
\begin{pmatrix}
\overline{\beta_0(\zeta)} & & & &\\
-1 &  \overline{\beta_1(\zeta)}& & &\\
& \ddots & \ddots &  &\\
& & -1 &\overline{\beta_n(\zeta)}&  
\end{pmatrix},
\end{align*}
with Toeplitz decomposition $\mathrm{H}+i\mathrm{K}$ \footnote{I.e., $\mathrm{H}:=(\mathrm{A}+\mathrm{A}^*)/2$ and $\mathrm{K}:=(\mathrm{A}-\mathrm{A}^*)/(2i)$.}. Then $\mathrm{H}>\mathrm{O}$ \footnote{I.e., $\mathrm{A}$ is strictly dissipative.}, the eigenvalues of $\mathrm{G}(\alpha_0,\dots,$ $\alpha_{n-1},\tau_{n})$ coincide with the zeros of $p_{n+1}(z, \zeta)$, and for all $\eta  \in S$,
\begin{align}\label{eq}
\left(\mathrm{p}, \mathrm{H}^{1/2}\frac{\mathrm{d}}{\mathrm{d} t}\big(\mathrm{H}^{-1/2}\mathrm{K}\mathrm{H}^{-1/2})\mathrm{H}^{1/2}\mathrm{p}\right)=\frac{(\mathrm{p},\mathrm{H}\mathrm{p})}{\cos(\overline{\zeta}\eta)-1}\, \frac{\mathrm{d}}{\mathrm{d} t}\arg(\eta)\quad ({\rm mod}(0,2\pi]),
\end{align}
where $\mathrm{p}:=(p_0(\eta, \zeta), \dots, p_n(\eta, \zeta))^{\mathrm{T}}$.
\end{theorem}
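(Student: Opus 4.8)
The plan is to read identity \eqref{eq} as a Hellmann--Feynman formula for the Hermitian pencil hidden inside $\mathrm{A}$, after disposing of the two preliminary assertions. For the positivity $\mathrm{H}>\mathrm{O}$, I would argue that $\mathrm{A}$ is, by construction, a generalized Cayley transform (in the sense of Fan) of the unitary matrix $\mathrm{G}(\alpha_0,\dots,\alpha_{n-1},\tau_n)$, the transform being regular precisely because $\zeta\notin S$; since the coefficients $\beta_j(\zeta)$ are produced by the Delsarte--Genin tridiagonal recursion attached to a nonnegative definite Toeplitz matrix, the Hermitian part $\mathrm{H}=(\mathrm{A}+\mathrm{A}^*)/2$ inherits positive definiteness (equivalently, all its leading principal minors are positive). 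For the coincidence of spectra I would compute the continuant of the tridiagonal pencil $\mathrm{A}-z\mathrm{A}^*$: using $|\zeta|=1$ one checks that the leading principal minors of $\mathrm{A}-z\mathrm{A}^*$ obey, after an elementary diagonal rescaling and sign normalization, exactly the three-term recurrence \eqref{rec1}, so that $\det(\mathrm{A}-z\mathrm{A}^*)$ and $p_{n+1}(z,\zeta)$ share their zeros; by the cited Delsarte--Genin identities these are the eigenvalues of $\mathrm{G}$.

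The core of the argument is the observation that, at an eigenvalue $\eta\in S$, the recurrence vector $\mathrm{p}=(p_0(\eta,\zeta),\dots,p_n(\eta,\zeta))^{\mathrm{T}}$ is a genuine generalized eigenvector of the pencil $(\mathrm{A},\mathrm{A}^*)$. Writing \eqref{rec1} in matrix form exhibits the rows of $\mathrm{A}-\eta\mathrm{A}^*$ as the three-term relation satisfied by $\mathrm{p}$, so that $p_{n+1}(\eta,\zeta)=0$ is equivalent to $\mathrm{A}\mathrm{p}=\eta\,\mathrm{A}^*\mathrm{p}$. Substituting $\mathrm{A}=\mathrm{H}+i\mathrm{K}$, $\mathrm{A}^*=\mathrm{H}-i\mathrm{K}$ and solving yields $\mathrm{K}\mathrm{p}=\mu\,\mathrm{H}\mathrm{p}$ with the \emph{real} scalar $\mu=\mu(\eta)=i(1-\eta)/(1+\eta)$. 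Setting $\mathrm{w}:=\mathrm{H}^{1/2}\mathrm{p}$ (legitimate since $\mathrm{H}>\mathrm{O}$), this reads $\mathrm{M}\mathrm{w}=\mu\mathrm{w}$ with $\mathrm{M}:=\mathrm{H}^{-1/2}\mathrm{K}\mathrm{H}^{-1/2}$; thus $\mathrm{w}$ is an eigenvector of the Hermitian matrix $\mathrm{M}$ associated with the real eigenvalue $\mu$.

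I would then differentiate $\mathrm{M}(t)\mathrm{w}(t)=\mu(t)\mathrm{w}(t)$ in $t$ --- the eigenvalue $\mu$ being simple because the zeros of a POPUC on $\mathbb{S}^1$ are simple, so $\mathrm{w}$ and $\mu$ depend differentiably on $t$ --- and pair the result with $\mathrm{w}$. The self-adjointness of $\mathrm{M}$ cancels the $\dot{\mathrm{w}}$ contributions and leaves the Hellmann--Feynman relation $(\mathrm{w},\dot{\mathrm{M}}\mathrm{w})=\dot\mu\,(\mathrm{w},\mathrm{w})$. Here $(\mathrm{w},\dot{\mathrm{M}}\mathrm{w})=(\mathrm{p},\mathrm{H}^{1/2}\dot{\mathrm{M}}\mathrm{H}^{1/2}\mathrm{p})$ is exactly the left-hand side of \eqref{eq}, while $(\mathrm{w},\mathrm{w})=(\mathrm{p},\mathrm{H}\mathrm{p})$; so it only remains to express $\dot\mu$ through $\tfrac{d}{dt}\arg(\eta)$. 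This is a chain-rule computation on the Cayley correspondence between $\mu$ and $\eta$: tracking the branch of $\zeta^{1/2}$ and the rotation by $\overline{\zeta}$ coming from the $\zeta^{1/2}$ prefactor of $\mathrm{A}$ recasts $\mu$ as a half-angle cotangent of $\arg(\overline{\zeta}\eta)$, whose derivative supplies precisely the factor $1/(\cos(\overline{\zeta}\eta)-1)$, with the $(\mathrm{mod}\,(0,2\pi])$ clause absorbing the ambiguity in the branch of $\arg(\eta)$.

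The Hellmann--Feynman step and the continuant identity are routine. The main obstacle, demanding genuine care, is the bookkeeping in the last step: one must pin down the exact form of the generalized Cayley correspondence $\mu\leftrightarrow\eta$ emerging from the Delsarte--Genin normalization (their Equations $3.14$ and $3.19$), including the branch of $\zeta^{1/2}$ and the sign conventions, so as to obtain the cotangent --- rather than tangent --- half-angle law and hence the correct sign and shift in the denominator $\cos(\overline{\zeta}\eta)-1$. A secondary delicate point is proving $\mathrm{H}>\mathrm{O}$ rigorously from the tridiagonal construction rather than merely plausibly, since this positivity is what renders $\mathrm{H}^{1/2}$, $\mathrm{M}$, and the entire symmetrization meaningful.
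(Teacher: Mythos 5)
Your plan is, at its core, the same proof the paper gives: turn \eqref{rec1} into a generalized eigenvalue problem for the pencil formed by $\mathrm{A}$ and $\mathrm{A}^*$, use $\mathrm{H}>\mathrm{O}$ (delegated, as in the paper, to the Delsarte--Genin theory) to pass to the Hermitian matrix $\mathrm{H}^{-1/2}\mathrm{K}\mathrm{H}^{-1/2}$, identify its eigenvalue at $\mathrm{H}^{1/2}\mathrm{p}$ as a half-angle cotangent, and finish with a Hellmann--Feynman differentiation, the simplicity of the eigenvalues guaranteeing differentiable dependence of eigenvalue and eigenvector on $t$. The differences are presentational rather than structural: where you substitute $\mathrm{A}=\mathrm{H}+i\mathrm{K}$ directly into the pencil relation, the paper invokes Fan's theorem \cite[Theorem $2.1$]{F74} on generalized Cayley transforms of strictly dissipative matrices (your substitution is the more elementary route to the same identity); and where you obtain the coincidence of the zeros of $p_{n+1}$ with the spectrum of $\mathrm{G}$ by a continuant computation plus a citation of Delsarte--Genin, the paper builds it through the Schur-complement compression $\mathrm{G}_{11}-\mathrm{G}_{12}(\mathrm{G}_{22}-\zeta\mathrm{I})^{-1}\mathrm{G}_{21}=\mathrm{G}(\alpha_0,\dots,\alpha_{j-1},\tau_j(\zeta))$, proving separately (this is the one substantive step your plan omits) that $\mathrm{G}_{22}-\zeta\mathrm{I}$ is invertible for every $\zeta\in\mathbb{S}^1$; your shortcut coincides with the alternative the paper itself mentions in a footnote via \cite[Equation $4.22$]{DG91b}. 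One concrete correction to your bookkeeping: the matrix form of \eqref{rec1} is $(\zeta\mathrm{A}^*+z\mathrm{A})\mathrm{p}=\zeta^{1/2}p_{n+1}(z,\zeta)\mathrm{e}_{n+1}$, so at a zero $\eta$ of $p_{n+1}$ the relation is $\zeta\mathrm{A}^*\mathrm{p}=-\eta\,\mathrm{A}\mathrm{p}$, not $\mathrm{A}\mathrm{p}=\eta\,\mathrm{A}^*\mathrm{p}$; substituting the Toeplitz decomposition into the correct pencil gives $\mathrm{K}\mathrm{p}=\cot\big(\arg(\overline{\zeta}\eta)/2\big)\,\mathrm{H}\mathrm{p}$ in one line, with the $\overline{\zeta}$ rotation and the cotangent (whose derivative produces the factor $1/(\cos(\arg(\overline{\zeta}\eta))-1)$ in \eqref{eq}) appearing automatically, whereas the pencil you wrote yields $\mu=\tan(\arg(\eta)/2)$, independent of $\zeta$, whose derivative would give the wrong denominator $1+\cos$. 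You flagged exactly this as the main obstacle, and the fix is only to use the correct pencil; after that your Hellmann--Feynman step closes the argument precisely as the paper's differentiation of its intermediate identity does.
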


\begin{proof}
The fact that $\mathrm{H}>\mathrm{O}$ is well known; two different proofs of this fact are given in \cite[Section $3$]{DG91a} and \cite[pp. 435-436]{DG91b}.
Since $\mathrm{G}(\alpha_0,\dots,\alpha_{n-1},\tau_n)$ is a differentiable matrix-valued function having simple eigenvalues (cf. \cite[Proposition $3.26$]{S10}), then its eigenvalues depend differentiably on $t$  (cf. \cite[Theorem 7, p. 130]{L07}). Furthermore, for each eigenvalue of  $\mathrm{G}(\alpha_0,\dots,\alpha_{n-1},\tau_n)$, we can choose an eigenvector that depends differentiably on $t$ (cf. \cite[Theorem 8, p. 130]{L07}). 

Let $\mathrm{G}(\alpha_0,\dots,\alpha_{n-1},$ $\tau_n)$ be  partitioned as
\begin{align}\label{G}
\mathrm{G}(\alpha_0,\dots,\alpha_{n-1},\tau_{n})=\begin{pmatrix}
\mathrm{G}_{11} & \mathrm{G}_{12}\\
\mathrm{G}_{21} & \mathrm{G}_{22}
\end{pmatrix},
\end{align}
$\mathrm{G}_{11}$ being the $(j+1)$-by-$(j+1)$ leading principal submatrix of $\mathrm{G}(\alpha_0,\dots,$ $\alpha_{n-1},\tau_n)$. We claim that $\mathrm{G}_{22}$ has no eigenvalues on $\mathbb{S}^1$. Indeed, it can be easily seen that $\mathrm{G}_{22}$ is the $(n-j)$-by-$(n-j)$ trailing principal submatrix of each of the matrices $\mathrm{G}(\alpha_j,\dots,\alpha_{n-1},\tau_n)$ and $\mathrm{G}(\alpha_j,\dots,\alpha_{n-1},\tau_n) \, \mathrm{D}$, $\mathrm{D}$ being the diagonal matrix obtained from the identity matrix by replacing the $(1,1)$ entry with a number in $\mathbb{S}^1\setminus \{1\}$. Suppose the assertion is false, and note that the eigenvalues of  $\mathrm{G}(\alpha_0,\dots,\alpha_{n-1},\tau_n)$ are simple and all its eigenvectors have nonzero components \footnote{In order to apply \cite[Lemma $2.2$]{CP1}, it would suffice to note that all the eigenvectors of any normal upper Hessenberg matrix with positive subdiagonal elements have nonzero component at the first (and last) entry (cf. \cite[Lemma 2.1]{L95}).} (cf. \cite[Proposition 5]{DG91b}).  Since $ \mathrm{G}(\alpha_j,\dots,$ $\alpha_{n-1},\tau_{n})$ and  $\mathrm{G}(\alpha_j,\dots,\alpha_{n-1},\tau_{n})  \, \mathrm{D}$ are unitary matrices, these matrices share all the eigenvalues of $\mathrm{G}_{22}$ on $\mathbb{S}^1$. This contradicts the fact that $ \mathrm{G}(\alpha_j,\dots,$ $\alpha_{n-1},\tau_{n})$ and  $\mathrm{G}(\alpha_j,\dots,\alpha_{n-1},\tau_{n})  \, \mathrm{D}$ have no common eigenvalues in agreement with \cite[Lemma $2.2$]{CP1}, and the claim is proved. Consequently, from the equality (cf. \cite[Equation $9$]{BH95}) 
 $$
\mathrm{G}_{11}-\mathrm{G}_{12}(\mathrm{G}_{22}-\zeta \mathrm{I})^{-1}\mathrm{G}_{21}=
 \mathrm{G}(\alpha_0,\dots,\alpha_{j-1},\tau_j(\zeta)),
 $$
 the polynomials
\begin{align*}
 P_1(z, \zeta)&:=z-\overline{\tau_1(\zeta)},\\
P_{j+1}(z,\zeta)&:=\det \big(z \mathrm{I}-\mathrm{G}(\alpha_0,\dots,\alpha_{j-1},\tau_j(\zeta))\big) \quad (j=2,\dots,n),
\end{align*}
are well defined for each $\zeta \in \mathbb{S}^1$. The technical advantage of these polynomials is that any three of them are connected by a simple relation (cf. \cite[Equation $3.3$]{DG91a} and \cite[Equation $10$]{BH95}). After an appropriated normalization (cf. \cite[p. 226]{DG91a}), the resulting polynomials satisfy the recurrence relation \eqref{rec1},  and the second statement of the theorem follows \footnote{One can be also prove this directly using \cite[Equation $4.22$]{DG91b}. But our main interest is in the previous construction.}. Without loss of generality (since, by hypothesis,  $\zeta \in \mathbb{S}^1\setminus S$ \footnote{This means that we are considering only the regular case of the theory presented in \cite{DG91b}.}), we assume the same initial conditions \cite[Equation $3.7$]{DG91a} (cf. \cite[Proposition 3]{DG91a} and \cite[p. 1049]{BH95}). Putting \eqref{rec1} in matrix form, we have (cf. \cite[Equation $2.20$]{DG91b})
$$
(\zeta \mathrm{A}^*+z \mathrm{A})\mathrm{p}=\zeta^{1/2}p_{n+1}(z,\zeta)\mathrm{e}_{n+1},
$$
where $\mathrm{e}_{n+1}:=(0,\dots,0,1)$. Hence
$
- \mathrm{A}^{-1}\mathrm{A}^*\mathrm{p}=\overline{\zeta}\eta\mathrm{p},
$
$\mathrm{A}$ being invertible by definition \footnote{$\mathrm{A}^{-1}\mathrm{A}^*$ is known as the generalized Cayley transform of $\mathrm{A}$ (cf. \cite{F72}).}. It is known (cf. \cite[Corollary $1.1$]{CP1}) that $p_{n}(z, \zeta)$ and $p_{n+1}(z, \zeta)$ have $\zeta$ as the only possible common eigenvalue, then $\mathrm{p}\not=0$ and, consequently, $\mathrm{p}$ is a right eigenvector of $-\mathrm{A}^{-1}\mathrm{A}^*$ associated with the eigenvalue $\overline{\zeta}\eta$. Set $\mathrm{B}:=i\mathrm{A}$. Since $\zeta$ is not an eigenvalue of $\mathrm{G}(\alpha_0,\dots,\alpha_{n-1},\tau_n)$, $\overline{\zeta}\mathrm{G}(\alpha_0,\dots,$ $\alpha_{n-1},$ $\tau_n)$ and $\mathrm{B}^{-1}\mathrm{B}^{*}$ have the same eigenvalues and none of them is equal to $1$.

By \cite[Theorem $2.1$]{F74}, $\mathrm{H}^{1/2}\mathrm{B}^{-1}\mathrm{B}^*\mathrm{H}^{-1/2}$ (since $\mathrm{H}>\mathrm{O}$, $\mathrm{H}^{1/2}$ is well defined and invertible) is unitary and, in turn, the Cayley transform of  $-\mathrm{H}^{-1/2}\mathrm{K}\mathrm{H}^{-1/2}$, i.e.,
$$
-\mathrm{H}^{-1/2}\mathrm{K}\mathrm{H}^{-1/2}=i (\mathrm{I}-\mathrm{H}^{1/2}\mathrm{B}^{-1}\mathrm{B}^*\mathrm{H}^{-1/2})^{-1} (\mathrm{I}+\mathrm{H}^{1/2}\mathrm{B}^{-1}\mathrm{B}^*\mathrm{H}^{-1/2}).
$$
Note that $(\overline{\zeta}\eta,$ $ \mathrm{H}^{1/2}\mathrm{p})$ is an eigenpair of $\mathrm{H}^{1/2}\mathrm{B}^{-1}\mathrm{B}^*\mathrm{H}^{-1/2}$. Hence $\big(i(1-\overline{\zeta}\eta)^{-1}(1+\overline{\zeta}\eta),\mathrm{H}^{1/2}\mathrm{p}\big)$  is an eigenpair of $-\mathrm{H}^{-1/2}\mathrm{K}\mathrm{H}^{-1/2}$, i.e., \footnote{Actually, $
\cot(\arg(\overline{\zeta}\eta)/2)=(\mathrm{p},\mathrm{K}\mathrm{p})/(\mathrm{p},\mathrm{H}\mathrm{p}).
$}
\begin{align}\label{eig}
\Big(\mathrm{H}^{1/2}\mathrm{p}, \big(\mathrm{H}^{-1/2}\mathrm{K}\mathrm{H}^{-1/2}\big)\mathrm{H}^{1/2}\mathrm{p}\Big)=\cot(\arg(\overline{\zeta}\eta)/2)(\mathrm{H}^{1/2}\mathrm{p},\mathrm{H}^{1/2}\mathrm{p})\, .
\end{align}
Finally, since the eigenvalues of $\mathrm{G}(\alpha_0,\dots,$ $\alpha_{n-1},$ $\tau_n)$ are simple, so are those of $\mathrm{H}^{-1/2}\mathrm{K}$ $\mathrm{H}^{-1/2}$, and \eqref{eq} follows by differentiation of \eqref{eig}.
\end{proof}

\begin{remark}\label{remarkrec}
If the starting point in Theorem \ref{main} is the sequence of polynomials defined recursively by \eqref{rec1}, $\beta_j$'s being differentiable functions of the real variable $t$, then \eqref{eq} remains true for all $\zeta \in \mathbb{S}^1$ other than the zeros of the polynomial $p_{n+1}$,  provided the condition $\mathrm{H}>\mathrm{O}$ holds \footnote{Note that this implies that the $\beta_j$'s are nonzero, i.e., $\mathrm{A}$ is invertible.}.
\begin{proof}
Since $\mathrm{A}$ is an invertible and differentiable matrix-valued function of the real variable $t$, so is $\mathrm{A}^{-1}\mathrm{A}^*$. Since this matrix has simple eigenvalues \cite[Section $2.1$]{F72}, it follows that its eigenvalues depend differentiably on $t$ and, for each eigenvalue, we can choose an eigenvector that depends differentiably on $t$. The rest of the proof runs as in the proof of Theorem \ref{main}.
\end{proof}

\end{remark}

The first goal of this work is a direct consequence of Theorem \ref{main}, and it reads as follows:

\begin{coro}\label{remark1}
Assume the notation and conditions of Theorem \ref{main}. Then $\eta$ moves strictly counterclockwise  along $\mathbb{S}^1$ as $t$ increases if and only if
$$
\left(\mathrm{p}, \mathrm{H}^{1/2}\frac{\mathrm{d}}{\mathrm{d} t}\big(\mathrm{H}^{-1/2}\mathrm{K}\mathrm{H}^{-1/2})\mathrm{H}^{1/2}\mathrm{p}\right)<0.
$$
\end{coro}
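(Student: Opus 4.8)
The plan is to read the corollary off directly from identity \eqref{eq} of Theorem \ref{main}, reducing the whole statement to a sign analysis of the scalar coefficient that multiplies $\frac{\mathrm{d}}{\mathrm{d} t}\arg(\eta)$ on the right-hand side. The first step is to unwind the geometric hypothesis: saying that $\eta$ moves strictly counterclockwise along $\mathbb{S}^1$ as $t$ increases means exactly $\frac{\mathrm{d}}{\mathrm{d} t}\arg(\eta)>0$, where a locally continuous branch of the argument is chosen (its derivative being independent of that choice). Thus the corollary amounts to showing that the left-hand side of \eqref{eq} is negative if and only if $\frac{\mathrm{d}}{\mathrm{d} t}\arg(\eta)>0$.

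The heart of the argument is then to determine the sign of the coefficient $\frac{(\mathrm{p},\mathrm{H}\mathrm{p})}{\cos(\overline{\zeta}\eta)-1}$ appearing in \eqref{eq}. For the numerator, I would invoke two facts already secured in the proof of Theorem \ref{main}: that $\mathrm{H}>\mathrm{O}$ and that $\mathrm{p}\neq 0$; together these give $(\mathrm{p},\mathrm{H}\mathrm{p})>0$. For the denominator, I would recall that by hypothesis $\zeta\in\mathbb{S}^1\setminus S$ while $\eta\in S$, so $\eta\neq\zeta$ and hence $\overline{\zeta}\eta\neq 1$ (this is precisely the assertion, noted in the theorem's proof, that the relevant eigenvalues are never equal to $1$). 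Consequently $\cos(\arg(\overline{\zeta}\eta))<1$ strictly, so the denominator is strictly negative and the whole coefficient is negative.

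With this in hand the equivalence is immediate: \eqref{eq} has the form (left-hand side) $=C\,\frac{\mathrm{d}}{\mathrm{d} t}\arg(\eta)$ with $C<0$, so the product is negative precisely when $\frac{\mathrm{d}}{\mathrm{d} t}\arg(\eta)>0$, which is the counterclockwise condition. I do not anticipate a genuine obstacle here, since the statement is a corollary of \eqref{eq}; the only points demanding care are the interpretation of the symbol $\cos(\overline{\zeta}\eta)$ as the cosine of $\arg(\overline{\zeta}\eta)$ and the role of the $({\rm mod}\,(0,2\pi])$ qualifier, which merely fixes a representative for $\arg(\eta)$ and leaves its $t$-derivative, and hence the sign analysis above, unaffected.
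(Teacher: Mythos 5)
Your proposal is correct and follows exactly the route the paper intends: the paper states this corollary without proof as a ``direct consequence'' of Theorem \ref{main}, namely reading off the sign of the coefficient in \eqref{eq}, with $(\mathrm{p},\mathrm{H}\mathrm{p})>0$ coming from $\mathrm{H}>\mathrm{O}$ and $\mathrm{p}\neq 0$ (both secured in the theorem's proof), and $\cos(\arg(\overline{\zeta}\eta))-1<0$ coming from $\overline{\zeta}\eta\neq 1$ since $\zeta\notin S$ while $\eta\in S$. Your handling of the branch of $\arg$ and of the interpretation of $\cos(\overline{\zeta}\eta)$ fills in precisely the details the paper leaves implicit.
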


\begin{remark}\label{remark}
Assume the notation and conditions of Theorem \ref{main}. Define $\mathrm{R}:=\mathrm{H}^{1/2}$ and $\mathrm{L}:=(\mathrm{d}/\mathrm{d}t) \mathrm{R}\mathrm{R}^{-1}\mathrm{K}$; by differentiation, we obtain
$$
 \mathrm{H}^{1/2}\frac{\mathrm{d}}{\mathrm{d} t}\big(\mathrm{H}^{-1/2}\mathrm{K}\mathrm{H}^{-1/2})\mathrm{H}^{1/2}=\frac{\mathrm{d}}{\mathrm{d} t}\mathrm{K}-(\mathrm{L}+\mathrm{L}^T).
$$
This implies that $\eta$ moves strictly counterclockwise  along $\mathbb{S}^1$ as $t$ increases if 
\begin{align}\label{lyap}
\frac{\mathrm{d}}{\mathrm{d} t}\mathrm{K}<\mathrm{L}+\mathrm{L}^T
\end{align}
The inequality \eqref{lyap} may be true even when some of the involved matrices are indefinite. Unfortunately, this (sufficient) condition is unwieldy to work with.\end{remark}

Although we do not pretend to offer a wider study of consequences of the above results, a simple analysis gives tractable sufficient conditions.
\begin{theorem}\label{coromain}
Assume the notation and conditions of Theorem \ref{main}. Define the sets $K_{+}:=\{ t \in \re \,|\, \mathrm{K}>\mathrm{O}\}$ and $K_{-}:=\{t  \in \re \,|\, \mathrm{K}<\mathrm{O}\}$. Set
$$
x_j:=\frac{\mathrm{d}}{\mathrm{d} t}\Re(\zeta^{-1/2}\beta_j(\zeta)), \quad y_j:=\frac{\mathrm{d}}{\mathrm{d} t}\Im(\zeta^{-1/2}\beta_j(\zeta)),
$$
and define also the sets
\begin{align*}
I_{++}&:=\{t\in \re \,|\, (\forall j\in \{0,\dots,n \})[x_j>0 \wedge y_j>0]\},\\
I_{-+}&:=\{t\in \re \,|\, (\forall j\in \{0,\dots,n \})[x_j<0 \wedge y_j>0]\},\\
 I_{--}&:=\{t\in \re \,|\, (\forall j\in \{0,\dots,n \})[x_j<0 \wedge y_j<0]\},\\ 
 I_{+-}&:=\{t\in \re \,|\, (\forall j\in \{0,\dots,n \})[x_j>0 \wedge y_j<0]\}.
 \end{align*}
Then the eigenvalues of $\mathrm{G}(\alpha_0,\dots,\alpha_{n-1},\tau_n)$ move strictly counterclockwise (respectively, clockwise) along $\mathbb{S}^1$ as $t$ increases on each of the nondegenerate intervals \footnote{We are considering the empty set and the singletons as degenerate intervals.}  that make up the set $(I_{++} \cap K_+) \cup (I_{-+} \cap K_-)$ (respectively, $(I_{--}\cap K_+)\cup (I_{+-} \cap K_-)$),  provided that at least one of them exists.
\end{theorem}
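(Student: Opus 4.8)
The plan is to reduce the statement to a sign count for the scalar
$$\Xi := \left(\mathrm{p}, \mathrm{H}^{1/2}\frac{\mathrm{d}}{\mathrm{d}t}\big(\mathrm{H}^{-1/2}\mathrm{K}\mathrm{H}^{-1/2}\big)\mathrm{H}^{1/2}\mathrm{p}\right),$$
whose negativity (respectively, positivity) characterises strictly counterclockwise (respectively, clockwise) motion of $\eta$ by Corollary \ref{remark1}. First I would record, from \eqref{eig}, that $\mathrm{q}:=\mathrm{H}^{1/2}\mathrm{p}$ is an eigenvector of the Hermitian matrix $\mathrm{H}^{-1/2}\mathrm{K}\mathrm{H}^{-1/2}$ with the real eigenvalue $\mu:=\cot(\arg(\overline{\zeta}\eta)/2)$; equivalently, $(\mu,\mathrm{p})$ solves the generalized eigenvalue problem $\mathrm{K}\mathrm{p}=\mu\mathrm{H}\mathrm{p}$, so that pairing with $\mathrm{p}$ gives $\mu=(\mathrm{p},\mathrm{K}\mathrm{p})/(\mathrm{p},\mathrm{H}\mathrm{p})$. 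Since $\mathrm{H}>\mathrm{O}$, the sign of $\mu$ is forced by the definiteness of $\mathrm{K}$: one has $\mu>0$ on $K_{+}$ and $\mu<0$ on $K_{-}$, for every eigenvector $\mathrm{p}$.

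The next step is to compute the derivatives of the two Toeplitz parts. Because $\zeta$ is held fixed, the subdiagonal entries $-\zeta^{1/2}$ of $\mathrm{A}$ are constant in $t$, so only the diagonal $\zeta^{1/2}\overline{\beta_j(\zeta)}$ of $\mathrm{A}$ contributes. A direct reading of the Toeplitz decomposition gives $\mathrm{H}_{jj}=\Re(\zeta^{-1/2}\beta_j(\zeta))$ and $\mathrm{K}_{jj}=-\Im(\zeta^{-1/2}\beta_j(\zeta))$, whence the derivatives are the diagonal matrices $\frac{\mathrm{d}}{\mathrm{d}t}\mathrm{H}=\diag(x_0,\dots,x_n)$ and $\frac{\mathrm{d}}{\mathrm{d}t}\mathrm{K}=\diag(-y_0,\dots,-y_n)$. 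A Hellmann--Feynman computation then supplies a workable formula for $\Xi$: differentiating $\mathrm{H}^{-1/2}\mathrm{K}\mathrm{H}^{-1/2}\mathrm{q}=\mu\mathrm{q}$ and pairing with $\mathrm{q}$ gives $\Xi=\mu'(\mathrm{p},\mathrm{H}\mathrm{p})$, while differentiating $\mathrm{K}\mathrm{p}=\mu\mathrm{H}\mathrm{p}$ and pairing with $\mathrm{p}$ gives $(\mathrm{p},\tfrac{\mathrm{d}}{\mathrm{d}t}\mathrm{K}\,\mathrm{p})=\mu'(\mathrm{p},\mathrm{H}\mathrm{p})+\mu(\mathrm{p},\tfrac{\mathrm{d}}{\mathrm{d}t}\mathrm{H}\,\mathrm{p})$; in both the terms carrying $\frac{\mathrm{d}}{\mathrm{d}t}\mathrm{p}$ cancel because $\mathrm{H},\mathrm{K}$ are Hermitian and $\mu$ is real. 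Eliminating $\mu'$ and inserting the diagonal derivatives yields
$$\Xi=\Big(\mathrm{p},\tfrac{\mathrm{d}}{\mathrm{d}t}\mathrm{K}\,\mathrm{p}\Big)-\mu\Big(\mathrm{p},\tfrac{\mathrm{d}}{\mathrm{d}t}\mathrm{H}\,\mathrm{p}\Big)=-\sum_{j=0}^{n}y_j|p_j|^2-\mu\sum_{j=0}^{n}x_j|p_j|^2,$$
where $p_j=p_j(\eta,\zeta)$.

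It remains to carry out the sign count over the four regions, using $\mathrm{p}\neq 0$ and the sign of $\mu$ fixed above. On $I_{++}\cap K_{+}$ one has $y_j>0$ and ($\mu>0$, $x_j>0$), so both summands are negative and $\Xi<0$; on $I_{-+}\cap K_{-}$ one has $y_j>0$ and ($\mu<0$, $x_j<0$), so again both summands are negative and $\Xi<0$; hence $\eta$ moves strictly counterclockwise on $(I_{++}\cap K_{+})\cup(I_{-+}\cap K_{-})$. Symmetrically, on $I_{--}\cap K_{+}$ and on $I_{+-}\cap K_{-}$ both signs flip, giving $\Xi>0$ and clockwise motion. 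Because all the defining inequalities are strict and open, these sets are unions of intervals, and the instantaneous characterisation of Corollary \ref{remark1} integrates to strict monotonicity of $\arg(\eta)$ on each nondegenerate subinterval; the argument is uniform in the eigenvalue $\eta$, so it applies to all eigenvalues of $\mathrm{G}(\alpha_0,\dots,\alpha_{n-1},\tau_n)$ at once.

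I expect the main obstacle to be the second step: confirming that $\frac{\mathrm{d}}{\mathrm{d}t}\mathrm{H}$ and $\frac{\mathrm{d}}{\mathrm{d}t}\mathrm{K}$ are exactly $\diag(x_j)$ and $\diag(-y_j)$---which hinges on $\zeta$ being independent of $t$, so that the off-diagonal $\zeta^{1/2}$-terms have vanishing derivative---together with the clean execution of the two Hellmann--Feynman cancellations for the complex, $t$-dependent eigenvectors $\mathrm{p}$ and $\mathrm{q}$. Once the displayed formula for $\Xi$ is secured, the remaining sign analysis is entirely routine.
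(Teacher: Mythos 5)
Your proof is correct, but it takes a genuinely different route from the paper's. You argue \emph{infinitesimally}: from \eqref{eig} you extract the generalized eigenpair $\mathrm{K}\mathrm{p}=\mu\mathrm{H}\mathrm{p}$ with $\mu=\cot(\arg(\overline{\zeta}\eta)/2)=(\mathrm{p},\mathrm{K}\mathrm{p})/(\mathrm{p},\mathrm{H}\mathrm{p})$, you observe that $\tfrac{\mathrm{d}}{\mathrm{d}t}\mathrm{H}=\diag(x_0,\dots,x_n)$ and $\tfrac{\mathrm{d}}{\mathrm{d}t}\mathrm{K}=\diag(-y_0,\dots,-y_n)$ (correct, since $\zeta$ is held fixed, so only the diagonal of $\mathrm{A}$ varies, and $\zeta^{1/2}\overline{\beta_j(\zeta)}=\overline{\zeta^{-1/2}\beta_j(\zeta)}$), and a generalized Hellmann--Feynman computation then turns the quantity $\Xi$ on the left-hand side of \eqref{eq} into the explicit weighted sum $\Xi=-\sum_{j=0}^{n}y_j|p_j|^2-\mu\sum_{j=0}^{n}x_j|p_j|^2$, after which the four sign regions are read off, the sign of $\mu$ being forced by the definiteness of $\mathrm{K}$. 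The paper argues \emph{globally} instead: for $t_0<t_1$ in one of the intervals, the same diagonal structure of the derivatives gives matrix monotonicity of $\mathrm{H}$ and $\mathrm{K}$, hence a pointwise inequality between the generalized Rayleigh quotients $(\mathrm{x},-\mathrm{K}\mathrm{x})/(\mathrm{x},\mathrm{H}\mathrm{x})$ at $t_0$ and $t_1$, and Ky Fan's eigenvalue comparison theorem for the generalized Cayley transform \cite[Theorem 6.1]{F74} then yields $\theta_j(t_0)<\theta_j(t_1)$ for the arguments of the eigenvalues of $\mathrm{B}^{-1}\mathrm{B}^*$. (Your derivative computation incidentally shows that the displayed inequality $\mathrm{O}<\mathrm{H}(t_1)<\mathrm{H}(t_0)$ in the paper's proof should read $\mathrm{O}<\mathrm{H}(t_0)<\mathrm{H}(t_1)$, since on $I_{++}$ one has $\tfrac{\mathrm{d}}{\mathrm{d}t}\mathrm{H}=\diag(x_j)>\mathrm{O}$; this is also the orientation the Rayleigh-quotient comparison actually requires.) As for what each approach buys: yours stays entirely inside the machinery of Theorem \ref{main} and Corollary \ref{remark1}---no external comparison theorem is needed, and the differentiability of the eigenpair is already secured there---and your formula for $\Xi$ is strictly sharper than the theorem demands, since it tolerates mixed signs of the $x_j,y_j$ provided the two $|p_j|^2$-weighted sums have the right total sign; the paper's two-point argument delivers monotonicity between arbitrary parameter values at one stroke, without integrating a pointwise derivative, and it is the version that transfers verbatim to the more general setting of Theorem \ref{final}.
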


\begin{proof}
We only prove the result concerning to the set $I_{++} \cap K_+$; the rest follows in the same way. Assume the notation of Remark \ref{remark}. Denote by $\eta_j$ $(j=0,\dots,n)$ the elements of $C$.  By the proof of Theorem \ref{main}, $\overline{\zeta}\mathrm{G}(\alpha_0,\dots,\alpha_{n-1},\tau_n)$ and  $\mathrm{B}^{-1}\mathrm{B}^*$ have the same eigenvalues and none of them equal to $1$, where $\mathrm{B}$ is as defined there. Consider two different points, say $t_0$ and $t_1$, $t_0<t_1$, in one of the nondegenerate intervals that make up the set $(I_{++} \cap K_+)$, provided it exists. Make temporary explicit that  $\eta_j$, $\mathrm{H}$, and $\mathrm{K}$ depend on $t$. Under our assumptions, we can assert that  $\mathrm{K}(t_1)<\mathrm{K}(t_0)$ and $\mathrm{O}<\mathrm{H}(t_1)<\mathrm{H}(t_0)$. Hence, for any nonzero vector $\mathrm{x}\in \co^{n+1}$,
$$
\frac{(\mathrm{x},-\mathrm{K}(t_0)\mathrm{x})}{(\mathrm{x},\mathrm{H}(t_0)\mathrm{x})}<\frac{(\mathrm{x},-\mathrm{K}(t_1)\mathrm{x})}{(\mathrm{x},\mathrm{H}(t_1)\mathrm{x})}.
$$
Denote the argument of the eigenvalues of $\mathrm{B}^{-1}\mathrm{B}^*$ (which, in turn, are given by $\overline{\zeta}\eta_j(t)$), arranged in decreasing order, by $0<\theta_{n}(t)<\dots<\theta_{2}(t)<\theta_{0}(t)<2\pi$. By Fan's eigenvalues comparison theorem for the generalized Cayley transform \cite[Theorem 6.1]{F74}, we have $\theta_j(t_0)<\theta_j(t_1)$, and the result follows.
\end{proof}

It is worth pointing out that a refined version of \cite[Theorem B]{C15} can be obtained in a straightforward way from Theorem \ref{coromain}.

\begin{coro}\label{c1}
 Assume the notation and conditions of Theorem \ref{coromain}. Assume further that $\mathrm{H}$ does not depend on $t$.  Define the set
\begin{align*}
 I_{0+}&:=\{t \in \re \,|\, (\forall j\in \{0,\dots,n \})[ y_j>0]\},\\ 
 I_{0-}&:=\{t \in \re \,|\, (\forall j\in \{0,\dots,n \})[ y_j<0]\}.
 \end{align*}
Then the eigenvalues of $\mathrm{G}(\alpha_0,\dots,\alpha_{n-1},\tau_n)$ move strictly counterclockwise (respectively, clockwise) along $\mathbb{S}^1$ as $t$ increases on each of the nondegenerate intervals that make up the set $I_{0+}$ (respectively, $I_{0-}$), provided that at least one of them exists.\end{coro}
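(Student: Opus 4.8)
The plan is to follow the proof of Theorem \ref{coromain} almost verbatim, up to the application of Fan's comparison theorem, while exploiting the fact that freezing $\mathrm{H}$ in $t$ collapses one of the two monotonicity requirements and — more importantly — dispenses entirely with the definiteness conditions $K_{\pm}$. First I would record the explicit structure of $\mathrm{H}$ and $\mathrm{K}$. Since $\mathrm{A}$ is the lower bidiagonal matrix with diagonal $\zeta^{1/2}\overline{\beta_j(\zeta)}$ and subdiagonal $-\zeta^{1/2}$, a direct computation gives that the diagonal entries of $\mathrm{H}$ and $\mathrm{K}$ are $\Re(\zeta^{-1/2}\beta_j(\zeta))$ and $-\Im(\zeta^{-1/2}\beta_j(\zeta))$, respectively, whereas their off-diagonal entries depend only on $\zeta$ and hence are constant in $t$. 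Consequently $(\mathrm{d}/\mathrm{d}t)\mathrm{K}=\diag(-y_0,\dots,-y_n)$ and $(\mathrm{d}/\mathrm{d}t)\mathrm{H}=\diag(x_0,\dots,x_n)$. The hypothesis that $\mathrm{H}$ does not depend on $t$ is therefore \emph{equivalent} to $x_j=0$ for every $j$, so the four sets $I_{++},I_{-+},I_{--},I_{+-}$ of Theorem \ref{coromain} are all empty and that theorem yields nothing; a direct argument is genuinely needed.

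Next, on a nondegenerate subinterval of $I_{0+}$, integrating $(\mathrm{d}/\mathrm{d}t)\mathrm{K}=\diag(-y_j)$ with every $y_j>0$ shows that for $t_0<t_1$ in it one has $\mathrm{K}(t_1)<\mathrm{K}(t_0)$ strictly, since each diagonal increment $\int_{t_0}^{t_1}(-y_j)\,\mathrm{d}s$ is negative. Because $\mathrm{H}$ is constant and $\mathrm{H}>\mathrm{O}$ by Theorem \ref{main}, for every nonzero $\mathrm{x}\in\co^{n+1}$ the quantity $(\mathrm{x},\mathrm{H}\mathrm{x})$ is positive and unchanged with $t$, so dividing $(\mathrm{x},-\mathrm{K}(t_0)\mathrm{x})<(\mathrm{x},-\mathrm{K}(t_1)\mathrm{x})$ by it yields
\[
\frac{(\mathrm{x},-\mathrm{K}(t_0)\mathrm{x})}{(\mathrm{x},\mathrm{H}\mathrm{x})}<\frac{(\mathrm{x},-\mathrm{K}(t_1)\mathrm{x})}{(\mathrm{x},\mathrm{H}\mathrm{x})}.
\]
This is precisely the inequality that feeds Fan's eigenvalue comparison theorem \cite[Theorem $6.1$]{F74} in the proof of Theorem \ref{coromain}. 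Applied to $\mathrm{B}=i\mathrm{A}$, whose generalized Cayley transform $\mathrm{B}^{-1}\mathrm{B}^*$ shares its eigenvalues $\overline{\zeta}\eta_j$ with $\overline{\zeta}\mathrm{G}(\alpha_0,\dots,\alpha_{n-1},\tau_n)$ and has none equal to $1$, it delivers $\theta_j(t_0)<\theta_j(t_1)$ for the decreasingly ordered arguments, i.e. strict counterclockwise motion of the eigenvalues; the set $I_{0-}$ reverses every inequality and produces clockwise motion.

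The one point worth isolating — and the only place where any care is required — is the observation that the definiteness hypotheses $K_{+}$ and $K_{-}$ in Theorem \ref{coromain} served solely to control the sign of the ratio $(\mathrm{x},-\mathrm{K}\mathrm{x})/(\mathrm{x},\mathrm{H}\mathrm{x})$ while \emph{both} numerator and the positive denominator were varying. Once $\mathrm{H}$ (hence the denominator) is frozen, the monotonicity of this ratio reduces to the bare comparison $\mathrm{K}(t_1)<\mathrm{K}(t_0)$, which requires no sign information on $\mathrm{K}$ whatsoever. This is exactly why $I_{0+}$ and $I_{0-}$ can be defined through the $y_j$ alone, with no intersection against $K_{\pm}$ and no constraint on the (identically vanishing) $x_j$. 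I do not anticipate a serious obstacle beyond verifying that the off-diagonal entries of $\mathrm{K}$ are indeed $t$-independent, which is what makes $(\mathrm{d}/\mathrm{d}t)\mathrm{K}$ diagonal and ties the sign condition cleanly to the $y_j$.
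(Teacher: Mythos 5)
Your proof is correct and follows essentially the same route the paper intends for this corollary: rerun the proof of Theorem \ref{coromain} with $\mathrm{H}$ frozen, so that the strict decrease $\mathrm{K}(t_1)<\mathrm{K}(t_0)$ alone makes the Rayleigh-quotient ratio $(\mathrm{x},-\mathrm{K}\mathrm{x})/(\mathrm{x},\mathrm{H}\mathrm{x})$ strictly increasing, which is then fed to Fan's comparison theorem exactly as in that proof, with no definiteness assumption on $\mathrm{K}$ required. Your side observation that the statement (as opposed to the proof) of Theorem \ref{coromain} is vacuous here, since $x_j\equiv 0$ empties all four sets $I_{++},I_{-+},I_{--},I_{+-}$, is accurate and consistent with reading the corollary, as the paper does, as a consequence of the proof rather than of the theorem itself.
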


For completeness, we also indicate the following case:

\begin{coro}
 Assume the notation and conditions of Theorem \ref{coromain}. Assume further that $\mathrm{K}$ does not depend on $t$. Define the sets
 \begin{align*}
 I_{+0}&:=\{t \in \re \,|\, (\forall j\in \{0,\dots,n \})[ x_j>0]\},\\
  I_{-0}&:=\{t \in \re \,|\, (\forall j\in \{0,\dots,n \})[ x_j<0]\}.
\end{align*}
Then the eigenvalues of $\mathrm{G}(\alpha_0,\dots,\alpha_{n-1},\tau_n)$ move strictly counterclockwise (respectively, clockwise) along $\mathbb{S}^1$ as $t$ increases on each of the nondegenerate intervals  that make up the set $I_{+0} \cap K_+$ (respectively, $I_{-0} \cap K_-$),  provided that at least one of them exists.
\end{coro}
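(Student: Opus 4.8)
The plan is to specialize the proof of Theorem~\ref{coromain} to the degenerate regime in which the alternating part is frozen, so that all of the $t$-dependence of $\mathrm{A}$ is carried by $\mathrm{H}$ alone. The structural observation that makes this work is that $\zeta$ is held fixed while $t$ varies: the subdiagonal entries $-\zeta^{1/2}$ of $\mathrm{A}$ are then constant, and only its diagonal entries $\zeta^{1/2}\overline{\beta_j(\zeta)}$ move. Hence the off-diagonal parts of $\mathrm{H}$ and $\mathrm{K}$ do not depend on $t$, and differentiating the diagonal gives $(\mathrm{d}/\mathrm{d}t)\mathrm{H}=\diag(x_0,\dots,x_n)$ with the $x_j$ exactly as in the statement. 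The standing hypothesis that $\mathrm{K}$ is independent of $t$ reads $(\mathrm{d}/\mathrm{d}t)\mathrm{K}=\mathrm{O}$, so $\mathrm{K}(t_0)=\mathrm{K}(t_1)$ for any two parameters, while $\mathrm{K}$ is positive (resp.\ negative) definite throughout $K_+$ (resp.\ $K_-$).

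I would first fix a nondegenerate interval contained in $I_{+0}\cap K_+$ together with two points $t_0<t_1$ of it. On $I_{+0}$ every $x_j>0$, so $(\mathrm{d}/\mathrm{d}t)\mathrm{H}$ is a positive-definite diagonal matrix on the whole interval and $\mathrm{O}<\mathrm{H}(t_0)<\mathrm{H}(t_1)$, whereas $\mathrm{K}(t_0)=\mathrm{K}(t_1)$ is positive definite. Substituting this into the quadratic-form comparison from the proof of Theorem~\ref{coromain}, for every nonzero $\mathrm{x}\in\co^{n+1}$ the numerator $(\mathrm{x},-\mathrm{K}\mathrm{x})$ is a fixed negative number while the denominator $(\mathrm{x},\mathrm{H}(t)\mathrm{x})$ is positive and strictly increasing, so that
\[
\frac{(\mathrm{x},-\mathrm{K}(t_0)\mathrm{x})}{(\mathrm{x},\mathrm{H}(t_0)\mathrm{x})}<\frac{(\mathrm{x},-\mathrm{K}(t_1)\mathrm{x})}{(\mathrm{x},\mathrm{H}(t_1)\mathrm{x})}.
\]
This is exactly the hypothesis of Fan's comparison theorem \cite[Theorem~6.1]{F74} for the generalized Cayley transform $\mathrm{B}^{-1}\mathrm{B}^*$ (with $\mathrm{B}=i\mathrm{A}$) introduced in the proof of Theorem~\ref{main}; applying it yields $\theta_j(t_0)<\theta_j(t_1)$ for the decreasingly ordered arguments $\theta_j(t)$ of the eigenvalues $\overline{\zeta}\eta_j(t)$, that is, strictly counterclockwise motion of each $\eta_j$ along $\mathbb{S}^1$.

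For the second set the same scheme applies with $I_{-0}$ in place of $I_{+0}$ and $K_-$ in place of $K_+$: now every $x_j<0$, so $\mathrm{H}$ is strictly decreasing, and $\mathrm{K}$ is negative definite; one re-examines the monotonicity of the ratio and reads off the orientation from Fan's theorem. It is worth recording that the whole corollary may also be seen as the limiting case $y_j\to0$ of Theorem~\ref{coromain}, and I would use the matching of the sign patterns of $I_{\pm\pm}\cap K_{\pm}$ as an independent check on which pairing of $K_{\pm}$ with $I_{\pm0}$ produces which direction of rotation.

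The main obstacle is not the linear algebra, which is a verbatim specialization of the proof of Theorem~\ref{coromain}, but the sign accounting that fixes the orientation. Since $\mathrm{K}$ is frozen, the sign of the constant numerator $(\mathrm{x},-\mathrm{K}\mathrm{x})$ depends only on whether one sits in $K_+$ or $K_-$, and it combines multiplicatively with the monotonicity of $(\mathrm{x},\mathrm{H}(t)\mathrm{x})$; thus the direction of rotation is governed by the product of the sign of $\mathrm{K}$ and the common sign of the $x_j$, rather than by either factor alone. The cleanest way to pin this product down, and the check I would perform before committing to the pairing, is the scalar model $n=0$, where $\eta=-\beta_0/\overline{\beta_0}$ and a direct computation gives $(\mathrm{d}/\mathrm{d}t)\arg\eta=2\mathrm{K}\,x_0/(\mathrm{H}^2+\mathrm{K}^2)$; this makes the orientation a positive multiple of $\mathrm{K}\,x_0$ and thereby determines, unambiguously, which half-lines $K_\pm$ must be paired with $I_{\pm0}$.
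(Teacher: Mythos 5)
Your treatment of the first set is correct and is, in substance, the paper's own proof: the paper offers no separate argument for this corollary, presenting it as an immediate specialization of the proof of Theorem \ref{coromain}, and that is exactly what you do --- with $\zeta$ held fixed only the diagonal of $\mathrm{A}$ moves, so $(\mathrm{d}/\mathrm{d}t)\mathrm{H}=\diag(x_0,\dots,x_n)$ and $(\mathrm{d}/\mathrm{d}t)\mathrm{K}=\mathrm{O}$, and the ratio comparison plus Fan's theorem (\cite[Theorem 6.1]{F74}) gives counterclockwise motion on $I_{+0}\cap K_+$. (Incidentally, your inequality $\mathrm{O}<\mathrm{H}(t_0)<\mathrm{H}(t_1)$ quietly corrects a slip in the printed proof of Theorem \ref{coromain}, which asserts $\mathrm{H}(t_1)<\mathrm{H}(t_0)$ on $I_{++}$; the direction you use is the one the ratio argument actually needs.)

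The gap is the second set, which you never execute: you write that ``one re-examines the monotonicity of the ratio and reads off the orientation,'' but carrying this out yields the opposite of what the corollary asserts. On $I_{-0}\cap K_-$ one still has $\mathrm{H}>\mathrm{O}$ for all $t$ (Theorem \ref{main}), $\mathrm{H}$ strictly decreasing, and $\mathrm{K}<\mathrm{O}$ constant; hence the numerator $(\mathrm{x},-\mathrm{K}\mathrm{x})$ is a \emph{positive} constant and the denominator $(\mathrm{x},\mathrm{H}(t)\mathrm{x})$ is positive and strictly decreasing, so the ratio $(\mathrm{x},-\mathrm{K}\mathrm{x})/(\mathrm{x},\mathrm{H}(t)\mathrm{x})$ strictly \emph{increases}, and Fan's theorem gives $\theta_j(t_0)<\theta_j(t_1)$: counterclockwise, not clockwise. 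Both of your own proposed checks say the same thing, had you pushed them through: your scalar formula makes the orientation a positive multiple of $\mathrm{K}x_0$, which is \emph{positive} on $I_{-0}\cap K_-$; and letting $y_j\to 0^{+}$ in Theorem \ref{coromain} identifies $I_{-0}\cap K_-$ as the boundary of the \emph{counterclockwise} set $I_{-+}\cap K_-$, not of either clockwise set. Concretely, in the setting of Remark \ref{remarkrec} (the setting in which the paper itself applies these corollaries, cf. Example \ref{eje1}): take $n=0$, $\zeta=1$, $\beta_0(t)=(2-t)+i$ for $0\leq t\leq 1$; then $\mathrm{H}=2-t>0$, $\mathrm{K}=-1$, $x_0=-1$, so these $t$ lie in $I_{-0}\cap K_-$, while $\eta(t)=-\beta_0(t)/\overline{\beta_0(t)}$ satisfies $(\mathrm{d}/\mathrm{d}t)\arg\eta=2/\big((2-t)^2+1\big)>0$. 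So the statement as printed is itself in error: the clockwise conclusion is valid on $(I_{-0}\cap K_+)\cup(I_{+0}\cap K_-)$, whereas $I_{-0}\cap K_-$ belongs with $I_{+0}\cap K_+$ in the counterclockwise alternative. A sound writeup must either prove only the first half and flag the second, or restate and prove the corrected pairing; your proposal, by deferring precisely the ``sign accounting'' that you correctly identify as the only real content of the corollary, proves the true half and leaves the false half untouched.
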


The preceding results may seem at first difficult to apply for expeditiously deriving interesting concrete results. However, this is not always true. For instance, when we deal with hypergeometric (or $q$-hypergeometric) polynomials with simple zeros on $\mathbb{S}^1$, the coefficients $\beta_j$'s in the notation of Theorem \ref{main} can be easily determined, as the following example illustrates. Indeed, by virtue of the contiguous relations of hypergeometric functions this class of polynomials furnish an inexhaustible reservoir of examples. 

\begin{eje}\label{eje1}
In \cite[Theorem $1.2$]{DR13}, it is studied, with respect to the parameter $b$, the monotonicity of zeros of the hypergeometric polynomial \footnote{In \cite[Theorem $1.2$]{DR13} the result was stated only for $a>1/2$. Obviously, in such work the polynomial \eqref{q1} is a POPUC by definition (cf. \cite[Equation $1.1$]{DR13}). Consequently, the first statement of \cite[Theorem $1.1$]{DR13} is immediate.  The recurrence relation considered in \cite[Theorem $3.2$]{DR13} can be transform into the simplest form \eqref{rec1} by a natural normalization process (cf. \cite[pp. 226-227]{DG91a}). Therefore the first statements of \cite[Theorem $3.2$]{DR13} and  \cite[Theorem $3.1$]{DR13} are immediate. Note also that in all the above mentioned results, we may say much more about the zeros of the involved polynomials simply because they are POPUC  (see e.g. \cite{CP1} and references therein).}
\begin{align}\label{q1}
r_{n+1}(z):=\frac{(2a)_{n+1}}{(a)_{n+1}}\, \pFq{2}{1}{-n-1,a+bi}{2a}{1-z},
\end{align}
$a$ and $b$ being real numbers with $a$ positive. However, the proof given therein is quite technical and long. In contrast,  Corollary \ref{c1} leads to the result immediately and by very simple means. In order to calculate the coefficients $\beta_j$'s in the notation of Theorem \ref{main}, it is useful to note, using contiguous relations of hypergeometric functions, that the polynomials $r_{j+1}$ ($j=0,\dots, n$)  can be generated recursively by   
\begin{align}\label{rec2}
r_{j+1}(z)=\big((a+j-ib)+(a+j+ib) z\big)r_{j}(z)-j(2a+j-1) z r_{j-1}(z),
\end{align}
with initial conditions $r_{-1}(z):=0$ and $r_0(z):=1$. To achieve our objective we have still to note that \eqref{rec2} can be transformed into the simplest form \eqref{rec1}. Indeed, there exist positive numbers $\lambda_{j}$ \footnote{These numbers are referred in this framework as the Jacobi parameters of the problem. In this case, routine calculations immediately reveal that they are determined uniquely by $j(2a+j-1)\lambda_j\lambda_{j+1}=1$ $(j=1,\dots, n)$ for any nonzero choice of the initial value $\lambda_1$.} (depending only on $a$) and nonzero complex numbers $c_j$ such that $\beta_j=(j+a-ib)\lambda_{j+1}$ and $p_{j+1}=c_{j+1} r_{j+1}$. (Consequently, these polynomials are POPUC.) Clearly, each $\beta_j$  is a nonzero differentiable function of $b$. Note that
$$
\left(\frac{j(2a+j-1)}{(a+j)(a+j+1)}\right)_{j=0}^\infty
$$
is a positive chain sequence associated with the ultraspherical polynomials (cf. \cite[p. 758]{L94}). Wall-Wetzel's theorem (cf. \cite[Theorem $7.2.1$]{I05}) now shows that $\mathrm{H}>\mathrm{O}$, being $\mathrm{H}$ defined as in Theorem \ref{main}, i.e.,
$$
\mathrm{H}=
\begin{pmatrix}
 a \lambda_1 & -1& & &\\
-1 &  (1+a)\lambda_2& \ddots& &\\
& \ddots & \ddots & -1  &\\
& & -1 & (n+a) \lambda_{n+1}&  
\end{pmatrix}.
$$
Since the polynomial \eqref{q1} has no zeros at $z=1$, in agreement with Remark \ref{remarkrec}, there is no loss of generality in assuming $\zeta:=1$. Thus $(\mathrm{d}/\mathrm{d} t)\Im(\beta_j)=-\lambda_j<0$ and, by Corollary \ref{c1}, the zeros of the polynomial \eqref{q1} move strictly clockwise along $\mathbb{S}^1$ as $b$ increases. 
\end{eje}

\section{Further results}\label{secfinal}
The notation of this section differs from that of Section \ref{mainresult}.  As we have already mentioned in the proof of Theorem \ref{main}, for a matrix $\mathrm{U}$ with simple eigenvalues on $\mathbb{S}^1\setminus \{1\}$, there exists a (nonunique) strictly dissipative matrix  $\mathrm{A}$ such that  $\mathrm{U}=\mathrm{A}^{-1}\mathrm{A}^{*}$. Let $\mathrm{H}+i\mathrm{K}$ be the Toeplitz decomposition of $\mathrm{A}$. To study the monotonicity with respect to a real parameter, say  $t$, of the eigenvalues of $\mathrm{U}$,  it suffices to study only the ``sign" of $\mathrm{H}$, $(\mathrm{d}/\mathrm{d} t)\mathrm{H}$, and $(\mathrm{d}/\mathrm{d} t)\mathrm{K}$, as follows from the proof of Theorem \ref{coromain}. For a given $\mathrm{U}$, this requires to identify $\mathrm{H}$ and $\mathrm{K}$, which in itself is not a simple question. Fortunately, when we deal with unitary upper Hessenberg matrices with positive subdiagonal elements, this question is solved by means of the tridiagonal theory and the matrices $\mathrm{H}$ and $\mathrm{K}$ have a simple structure, as already seen. But if the unitary upper Hessenberg matrix considered in the preceding theory is replaced by an arbitrary matrix with simple eigenvalues on $\mathbb{S}^1$, then virtually all of the results remain true, {\em mutantis mutandis}. In view of the above observations, let us (at least) rewrite Theorem \ref{coromain} in the following terms:

\begin{theorem}\label{final}
Let $\mathrm{A}$ be a differentiable matrix-valued function of the real variable $t$. Assume that $\mathrm{A}$ is strictly dissipative with Toeplitz decomposition $\mathrm{H}+i\mathrm{K}$. Define the sets $H_{+}:=\{t\in \re  \,|\, \mathrm{H}>\mathrm{O}\}$, $H_{-}:=\{t\in \re  \,|\, \mathrm{H}<\mathrm{O}\}$,
\begin{align*}
I_{++}&:=\{t\in \re  \,|\,  (\mathrm{d}/\mathrm{d} t)\mathrm{H}>\mathrm{O} \wedge  (\mathrm{d}/\mathrm{d} t)\mathrm{K}>\mathrm{O}\},\\
I_{-+}&:=\{t\in \re  \,|\, (\mathrm{d}/\mathrm{d} t)\mathrm{H}<\mathrm{O} \wedge  (\mathrm{d}/\mathrm{d} t)\mathrm{K}>\mathrm{O}\},\\
 I_{--}&:=\{ t\in \re  \,|\, (\mathrm{d}/\mathrm{d} t)\mathrm{H}<\mathrm{O} \wedge (\mathrm{d}/\mathrm{d} t) \mathrm{K}<\mathrm{O}\},\\
  I_{+-}&:=\{t\in \re  \,|\, (\mathrm{d}/\mathrm{d} t)\mathrm{H}>\mathrm{O} \wedge  (\mathrm{d}/\mathrm{d} t)\mathrm{K}<\mathrm{O}\}.
 \end{align*}
Then the eigenvalues of $\mathrm{A}^{-1}\mathrm{A}^*$ move strictly counterclockwise (respectively, clockwise) along $\mathbb{S}^1$ as $t$ increases on each of the nondegenerate intervals that make up the set $(I_{++} \cap H_-) \cup (I_{+-} \cap H_+)$ (respectively, $(I_{--}\cap H_-) \cup (I_{-+} \cap H_+)$),  provided that at least one of them exists.
\end{theorem}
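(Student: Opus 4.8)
The plan is to run the argument of Theorem \ref{coromain} essentially verbatim, with the generic strictly dissipative matrix-valued function $\mathrm{A}$ in place of the matrix $\mathrm{B}=i\mathrm{A}$ that was built there from the tridiagonal data. Nothing in that proof used the Hessenberg or tridiagonal structure beyond the facts that $\mathrm{A}$ is a differentiable, invertible, strictly dissipative matrix-valued function and that its generalized Cayley transform $\mathrm{A}^{-1}\mathrm{A}^*$ has spectrum on $\mathbb{S}^1$. So first I would record the standing structural facts: on each of the sets $H_+$ and $H_-$ the Hermitian part $\mathrm{H}$ is definite, hence $\mathrm{A}$ is invertible and $\mathrm{A}^{-1}\mathrm{A}^*$ is similar (through $\mathrm{H}^{1/2}$ when $\mathrm{H}>\mathrm{O}$, through $(-\mathrm{H})^{1/2}$ when $\mathrm{H}<\mathrm{O}$) to a unitary matrix, so its eigenvalues $\eta_j(t)$ lie on $\mathbb{S}^1$ and depend differentiably on $t$, exactly as at the opening of the proof of Theorem \ref{main}. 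Passing to $i\mathrm{A}$ as there, one also arranges that none of these eigenvalues sits at the point where the Cayley transform degenerates.

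Next, I would fix a nondegenerate subinterval of one cell, say $(I_{+-}\cap H_+)$, choose $t_0<t_1$ in it, and integrate the infinitesimal hypotheses. On this interval $(\mathrm{d}/\mathrm{d}t)\mathrm{H}>\mathrm{O}$ and $(\mathrm{d}/\mathrm{d}t)\mathrm{K}<\mathrm{O}$ integrate to the finite Loewner comparisons $\mathrm{H}(t_0)<\mathrm{H}(t_1)$ and $\mathrm{K}(t_1)<\mathrm{K}(t_0)$, while $H_+$ supplies $\mathrm{O}<\mathrm{H}(t_0)<\mathrm{H}(t_1)$. This is precisely the data needed to compare, for every nonzero $\mathrm{x}$, the generalized Rayleigh quotient $(\mathrm{x},\mathrm{K}\mathrm{x})/(\mathrm{x},\mathrm{H}\mathrm{x})$ — equivalently the half-angle cotangent $\cot(\arg(\eta)/2)$ recorded in the proof of Theorem \ref{main} — at $t_0$ and at $t_1$, and to conclude that it is ordered monotonically in $t$ uniformly in $\mathrm{x}$. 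The three remaining cells are treated identically after replacing $\mathrm{A}$ by $i\mathrm{A}$, $-\mathrm{A}$, or $-i\mathrm{A}$, which exchanges the roles and signs of $\mathrm{H}$ and $\mathrm{K}$ and rotates the spectrum of the Cayley transform rigidly, so that the sense of rotation is preserved.

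With the pointwise ordering of the Rayleigh quotients established, I would invoke Fan's eigenvalue comparison theorem for the generalized Cayley transform \cite[Theorem 6.1]{F74}, exactly as in the closing line of the proof of Theorem \ref{coromain}. Writing the arguments of the eigenvalues of $\mathrm{A}^{-1}\mathrm{A}^*$ in decreasing order as $0<\theta_n(t)<\dots<\theta_0(t)<2\pi$, the comparison forces $\theta_j(t_0)<\theta_j(t_1)$ for every $j$ when the quotient decreases, i.e.\ each eigenvalue moves strictly counterclockwise; the reversed ordering of the quotient gives the clockwise statement on the complementary unions. Since $t_0<t_1$ were arbitrary, strict monotonicity of each $\theta_j$ on the whole subinterval follows, which is the assertion.

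The main obstacle is the bookkeeping in the middle step: matching each of the four sign patterns $I_{\pm\pm}$, intersected with the correct definiteness set $H_\pm$, to a single consistent direction of rotation, and verifying that in each case the definiteness hypothesis genuinely removes the indeterminacy in comparing the ratios $(\mathrm{x},\mathrm{K}\mathrm{x})/(\mathrm{x},\mathrm{H}\mathrm{x})$, whose numerator and denominator need not individually keep a fixed sign. The rigid rotations $\mathrm{A}\mapsto i^{k}\mathrm{A}$, together with the fact that Fan's theorem compares the \emph{ordered} arguments rather than eigenvalues one by one, are what collapse the four cases into the two displayed unions; confirming that the orientation emerges as stated, and not reversed, is the only point that demands genuine care.
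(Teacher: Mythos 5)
Your skeleton is the paper's own: the paper proves Theorem \ref{final} in one line, by combining Remark \ref{remark} with the proof of Theorem \ref{coromain}, which is exactly the chain you propose (integrate the definiteness of $(\mathrm{d}/\mathrm{d}t)\mathrm{H}$ and $(\mathrm{d}/\mathrm{d}t)\mathrm{K}$ into Loewner orderings between two points $t_0<t_1$, convert these into a comparison of Rayleigh quotients valid uniformly in $\mathrm{x}$, then apply Fan's comparison theorem \cite[Theorem 6.1]{F74} to the ordered arguments). However, there is a genuine gap at precisely the step you flag as ``the only point that demands genuine care'', and your own diagnosis of it is backwards. You assert that the Loewner data $\mathrm{O}<\mathrm{H}(t_0)<\mathrm{H}(t_1)$, $\mathrm{K}(t_1)<\mathrm{K}(t_0)$ ``is precisely the data needed'' to order the quotients $(\mathrm{x},\mathrm{K}\mathrm{x})/(\mathrm{x},\mathrm{H}\mathrm{x})$. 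It is not: one must also use the standing hypothesis of strict dissipativity, which in this section of the paper means $\mathrm{K}>\mathrm{O}$ for \emph{all} $t$ (this is the convention under which the paper's numerical example checks dissipativity via diagonal dominance of $\mathrm{K}$; it is not the $\mathrm{H}>\mathrm{O}$ convention of the footnote to Theorem \ref{main}). Without it the comparison, and indeed the theorem, is false. Scalar counterexample: take $\mathrm{A}(t)=h(t)+ik(t)$ with $h$ increasing linearly from $h(t_0)=1$ to $h(t_1)=10$ and $k$ decreasing linearly from $k(t_0)=-1$ to $k(t_1)=-2$; then $[t_0,t_1]$ lies in your cell $I_{+-}\cap H_+$, yet the eigenvalue of $\mathrm{A}^{-1}\mathrm{A}^*$ moves from $(1+i)/(1-i)=i$ (argument $\pi/2$) to $(10+2i)/(10-2i)=(12+5i)/13$ (argument $\approx 0.39$), i.e., clockwise. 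So the step you leave open is not mere bookkeeping: it is where the unused hypothesis must enter. And contrary to your closing remark that the ``numerator and denominator need not individually keep a fixed sign'', the whole point of the hypotheses is that they \emph{do}: $(\mathrm{x},\mathrm{K}\mathrm{x})>0$ for all $t$ by dissipativity, and $(\mathrm{x},\mathrm{H}\mathrm{x})$ has a fixed sign on the cell by the intersection with $H_{\pm}$; once both signs are pinned, the chain $k_1/h_1<k_0/h_1<k_0/h_0$ (and its three variants) is elementary.

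A secondary defect is the case reduction. Replacing $\mathrm{A}$ by $-\mathrm{A}$ produces a matrix whose imaginary part is $-\mathrm{K}<\mathrm{O}$, hence not strictly dissipative, so that substitution is unavailable; and since $(c\mathrm{A})^{-1}(c\mathrm{A})^*=(\bar{c}/c)\,\mathrm{A}^{-1}\mathrm{A}^*$ for $|c|=1$ is a rigid rotation of the spectrum, no unimodular scaling can ever convert a counterclockwise cell into a clockwise one. The map $\mathrm{A}\mapsto -i\mathrm{A}$ does legitimately identify $I_{++}\cap H_-$ with $I_{+-}\cap H_+$ (and $I_{--}\cap H_-$ with $I_{-+}\cap H_+$), but the clockwise pair must then be handled by rerunning the argument with all inequalities reversed (equivalently, by the time reversal $t\mapsto -t$), which is how the paper disposes of the analogous cases in Theorem \ref{coromain} with ``the rest follows in the same way''.
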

\begin{proof}
Combining the proofs of Remark \ref{remark} and Theorem \ref{coromain} gives the desired conclusion. 
\end{proof}

The following numerical example helps to elucidate Theorem \ref{final}, and thus Theorem \ref{coromain}, something that the technical simplicity of Example \ref{eje1} does not allow.

\begin{eje}

Define $\mathrm{A}:=\mathrm{H}+i\mathrm{K}$, where we have set 
\begin{align*}
\mathrm{H}&:=
\begin{pmatrix}
 t & 1& & &\\
1 &  t& \ddots& &\\
& \ddots & \ddots & 1  &\\
& & 1 & t&  
\end{pmatrix},\\
\mathrm{K}&:=
\begin{pmatrix}
\pi + \cos t & 1& & &\\
1 &  \pi+\frac{1}{2} \cos t& \ddots& &\\
& \ddots & \ddots &  1&\\
& & 1 &\pi + \frac{1}{n} \cos t&  
\end{pmatrix},
\end{align*}
$t$ being real. For all $t$, $\mathrm{K}$ is strictly diagonally dominant, so $\mathrm{A}$ is strictly dissipative (cf. \cite[Theorem 6.1.10]{HJ}). Using the formula for the eigenvalues of a tridiagonal Toeplitz matrix (cf. \cite[Problem 1.4.P17]{HJ}), we see at once that
\begin{align*}
 H_{-}&=\left\{t\in \re  \,\big|\, t<-2 \cos \frac{\pi n }{n+1}\right\}, \quad & H_{+}&=\left\{t\in \re \,\big|\, t>-2 \cos \frac{\pi}{n+1}\right\}.&
 \end{align*}
 On the other hand, since for all $t$, $(\mathrm{d}/\mathrm{d} t)\mathrm{H}>\mathrm{O}$,
 \begin{align*}
I_{++}&=\bigcup_{k \in \mathbb{Z}} \big(-\pi+2 \pi k, 2 \pi k\big), \quad  & I_{+-}&=\bigcup_{k \in \mathbb{Z}} \big( 2 \pi k, \pi+ 2\pi k \big),&\\
 I_{--}&=\emptyset, \quad & I_{-+}&=\emptyset.&
 \end{align*}
 By Theorem \ref{final},  the eigenvalues of $\mathrm{A}^{-1}\mathrm{A}^*$ move strictly counterclockwise along $\mathbb{S}^1$ as $t$ increases on each of the intervals that make up the set  $(I_{++} \cap H_-) \cup (I_{+-} \cap H_+)$. Of course, these intervals depend on the order of $\mathrm{A}$. Fix $n=5$. Hence $H_-=(-\infty,-\sqrt{3})$ and $H_+=(\sqrt{3},\infty)$, which gives
 \begin{align*}
I_{++} \cap H_-&=  \bigcup_{k \in \mathbb{Z}\setminus\mathbb{N}} \big( -\pi+2 \pi k, 2 \pi k \big) \cup (-\pi, -\sqrt{3}),\\
I_{+-} \cap H_+&= (\sqrt{3},\pi) \cup \bigcup_{k \in \mathbb{N}\setminus\{0\}} \big( 2 \pi k, \pi+ 2 \pi k\big).
 \end{align*}
Table \ref{Table1} reports, for some values of $t$, the arguments of the eigenvalues of $\mathrm{A}^{-1}\mathrm{A}^*$ normalized to the interval $(0,2\pi]$. In the first column, we indicate the set to which the values of t belong. There the arguments of the eigenvalues increase as $t$ increases, in concordance with Theorem \ref{final}. The highlighted rows correspond to values of $t$ in intervals that do not belong to any of the sets described in Theorem \ref{final}. Only for some of these intervals the eigenvalues of $\mathrm{A}^{-1}\mathrm{A}^*$ are not monotone functions of $t$. This reminds us that indeed our conditions are only sufficient conditions. Figure \ref{fig1} shows the behavior of the arguments on $(2\pi, 7\pi)$. Although in the intervals $(3\pi, 4\pi)$ and $(5\pi, 6\pi)$ they are not monotone functions of $t$ (in the rest they are, again in concordance with Theorem \ref{final}), the prevailing direction of movement when $t$ increase is positive (see Figure \ref{fig2}). Actually, when $t$ tends to $\pm \infty$, they apparently converge and, in spite of the fact that they do not do it monotonically (see Figure \ref{fig2}), Theorem \ref{final} allows us to detect certain intervals in which this happens.
 \begin{center}\small \small \small
 \begin{table}[h]
\centering
\begin{tabular}{c c c c c c c}
\hline 
\rule{0pt}{1.2em} Sets & $t$ & $\theta_1$ & $\theta_2$ & $\theta_3$ & $\theta_4$ & $\theta_5$  \\
\hline
\rule{0pt}{1.2em}  $I_{++} \cap H_-$ & $-9$ & $0.185831$ & $0.335069$ & $0.578815$ & $0.86836$ & $1.11515$  \\
\rule{0pt}{1.2em} & $-7$ & $0.382408$ & $0.605763$ & $0.933046$ & $1.28985$ & $1.55583$  \\
\rule{0pt}{1.2em} & $-6.3$ & $0.435125$ & $0.687466$ & $1.04999$ & $1.4372$ & $1.7185$ \\
\hline \rowcolor[gray]{0.95}
\rule{0pt}{1.2em} & $-5$ & $0.444007$ & $0.725935$ & $1.16298$ & $1.63852$ & $1.98166$   \\
\rowcolor[gray]{0.95} \rule{0pt}{1.2em} & $-4.5$ & $0.418563$ & $0.709818$ & $1.18639$ & $1.71359$ & $2.0937$   \\
\rowcolor[gray]{0.95} \rule{0pt}{1.2em} & $-4$ & $0.388862$ & $0.697837$ & $1.22427$ & $1.8118$ & $2.22885$   \\
\hline
\rule{0pt}{1.2em}  $I_{++} \cap H_-$ & $-3$ & $0.402512$ & $0.783405$ & $1.44259$ & $2.14272$ & $2.59102$ \\
\rule{0pt}{1.2em} & $-2$ & $0.639983$ & $1.14671$ & $1.94338$ & $2.6444$ & $3.02789$ \\
\rule{0pt}{1.2em} &  $-1.9$ & $0.676005$ & $1.19954$ & $2.0053$ & $2.69728$ & $3.07083$   \\
\hline
\rowcolor[gray]{0.95} \rule{0pt}{1.2em} & $-1.5$ & $0.837019$ & $1.43245$ & $2.25954$ & $2.90327$ & $3.2362$   \\
\rowcolor[gray]{0.95} \rule{0pt}{1.2em}  & $0$ & $1.602$ & $2.41331$ & $3.14159$ & $3.56758$ & $3.77881$   \\
\rowcolor[gray]{0.95} \rule{0pt}{1.2em} & $1.6$ & $2.95322$ & $3.69139$ & $4.08756$ & $4.26589$ & $4.34323$  \\
\hline
\rule{0pt}{1.2em}  $I_{+-} \cap H_+$ & $1.8$ & $3.24452$ & $3.8919$ & $4.21492$ & $4.35617$ & $4.41543$ \\
\rule{0pt}{1.2em} & $2$ & $3.56526$ & $4.09244$ & $4.33892$ & $4.44364$ & $4.49258$ \\
\rule{0pt}{1.2em} &  $3$ & $4.72952$ & $4.75027$ & $4.78687$ & $4.86612$ & $5.14634$   \\
\hline
\rowcolor[gray]{0.95} \rule{0pt}{1.2em}  & $3.5$ & $4.83891$ & $4.88426$ & $4.95832$ & $5.08044$ & $5.3373$ \\
\rowcolor[gray]{0.95} \rule{0pt}{1.2em} & $4$ & $4.91632$ & $4.96699$ & $5.05207$ & $5.18974$ & $5.38445$ \\
\rowcolor[gray]{0.95} \rule{0pt}{1.2em} &  $5$ & $5.00418$ & $5.04361$ & $5.12066$ & $5.24861$ & $5.41438$   \\%
\hline
\end{tabular}
\medskip
\caption{Arguments of the eigenvalues of $\mathrm{A}^{-1}\mathrm{A}^*$.}\label{Table1}
\end{table}
\end{center}
\begin{figure}[h]
\centering
  \includegraphics[width=9cm]{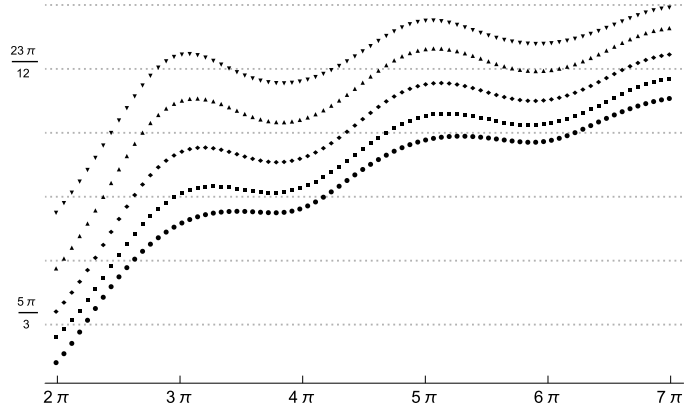}
  \caption{Arguments of the eigenvalues of $\mathrm{A}^{-1}\mathrm{A}^*$.}\label{fig1}
\end{figure}

\begin{figure}[h]
\centering
  \includegraphics[width=9cm]{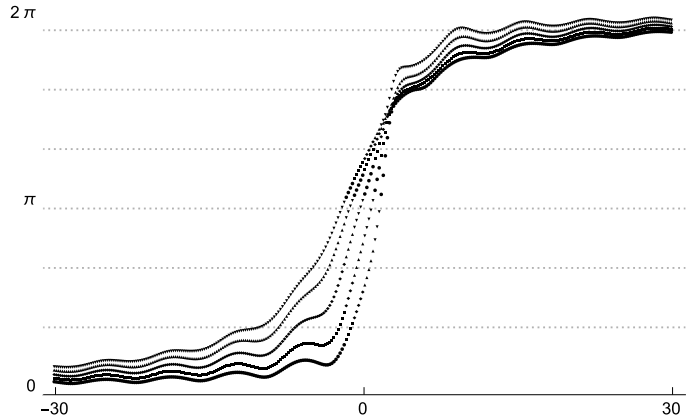}
  \caption{Arguments of the eigenvalues of $\mathrm{A}^{-1}\mathrm{A}^*$.}\label{fig2}
\end{figure}

\end{eje}

 A final question could be raised here: Is Theorem \ref{final} an unexpected result? The answer is no. Set $z:=x+i y$, $x$ and $y$ being differentiable real valued functions of a real variable, say $t$, with $y$ positive. Assume, for instance, that $x$ is negative, and $x$ and $y$ are strictly increasing functions of $t$. Since 
 $$
\theta:=\arg(z^{-1}\overline{z})=-2\arctan\left(\frac{y}{x} \right) \quad ({\rm mod}(0,2\pi]),
 $$ 
 $\theta$ moves strictly counterclockwise on $\mathbb{S}^1$ as $t$ increase. Roughly speaking, what we did in this work was to set up similar results in a matrix context.

\section*{Acknowledgements}
The author is supported by the Portuguese Government through the Funda\c{c}\~ao para a Ci\^encia e a Tecnologia (FCT) under the grant SFRH/BPD/101139/2014. This work is partially supported by the Centre for Mathematics of the University of Coimbra -- UID/MAT/00324/2013, funded by the Portuguese Government through FCT/MCTES and co-funded by the European Regional Development Fund through the Partnership Agreement PT2020. 
\bibliographystyle{plain}      

\bibliography{bib}   

\end{document}